\newtheorem{theorem}{Theorem}
\theoremstyle{plain}
\newtheorem{definition}{Definition}
\numberwithin{equation}{section}
\begin{document}
\title{On some inequalities for different kinds of convexity}
\author{Merve Avci Ardic$^{\bigstar \diamondsuit }$}
\address{$^{\bigstar }$Adiyaman University, Faculty of Science and Arts,
Department of Mathematics, Adiyaman 02040, Turkey}
\email{mavci@posta.adiyaman.edu.tr}
\thanks{$^{\diamondsuit }$Corresponding Author}
\author{M.Emin \"{O}zdemir$^{\blacklozenge }$}
\address{$^{\blacklozenge }$Atat\"{u}rk University, K.K. Education Faculty,
Department of Mathematics, Erzurum 25240, Turkey}
\email{emos@atauni.edu.tr}
\keywords{$\varphi _{h}-$convex function, $\log -\varphi -$convex function, $%
\varphi -quasi-$convex function.}

\begin{abstract}
In this paper, we obtained some inequalities for $\varphi _{s}-$convex
function,  $\varphi -$Godunova-Levin function, $\varphi -P-$function and $%
\log -\varphi -$convex function. Finally, we defined the class of $\varphi
-quasi-$convex functions and we examined some properties of this class.
\end{abstract}

\maketitle

\section{introduction}

Let us consider a function $\varphi :[a,b]\rightarrow \lbrack a,b]$ where $%
[a,b]\subset 
\mathbb{R}
.$

In \cite{S} and \cite{SSSS}, M.Z. Sarikaya defined the following classes:

\begin{definition}
\label{def 1.2} Let $I$ be an interval in $%
\mathbb{R}
$ and $h:\left( 0,1\right) \rightarrow \left( 0,\infty \right) $ be a given
function. We say that a function $f:I\rightarrow \lbrack 0,\infty )$ is $%
\varphi _{h}-$convex if 
\begin{equation}
f\left( t\varphi (x)+(1-t)\varphi (y)\right) \leq h(t)f(\varphi
(x))+h(1-t)f\left( \varphi (y)\right)   \label{1.7}
\end{equation}%
for all $x,y\in I$ and $t\in \left( 0,1\right) .$ If inequality (\ref{1.7})
is reversed, then $f$ is said to be $\varphi _{h}-$concave. In particular if 
$f$ satisfies (\ref{1.7}) with $h(t)=t,$ $h(t)=t^{s}$ $\left( s\in \left(
0,1\right) \right) ,$ $h(t)=\frac{1}{t\text{ }}$ and $h(t)=1,$ then $f$ is
said to be $\varphi -$convex, $\varphi _{s}-$convex, $\varphi -$%
Godunova-Levin function and $\varphi -P-$function, respectively.
\end{definition}

\begin{definition}
\label{def 1.3} Let us consider a $\varphi :[a,b]\rightarrow \lbrack a,b]$
where $[a,b]\subset 
\mathbb{R}
$ and $I$ stands for a convex subset of $%
\mathbb{R}
.$ We say that a function $f:I\rightarrow 
\mathbb{R}
^{+}$ is a $\log -\varphi -$convex if 
\begin{equation*}
f\left( t\varphi (x)+(1-t)\varphi (y)\right) \leq \left[ f(\varphi (x))%
\right] ^{t}\left[ f\left( \varphi (y)\right) \right] ^{1-t}
\end{equation*}%
for all $x,y\in I$ and $t\in \lbrack 0,1].$
\end{definition}

In this paper, we examined the character of the function $f\circ \varphi $
according to character of $f$ and $\varphi $ functions and we obtained
inequalities for $\log -\varphi -$convex function, $\varphi _{s}-$convex
function, $\varphi -$Godunova-Levin function and $\varphi -P-$function.
Finally we defined $\varphi -quasi-$convex functions and we gave some
properties of this class.

\section{Main results}

\begin{theorem}
\label{teo 2.1} Let $f$ be $\varphi _{s}-$convex function. Then i) if $%
\varphi $ is linear, then $f\circ \varphi $ is $s-$convex in the second
sense and ii) if $f$ is increasing and $\varphi $ is convex, then $f\circ
\varphi $ is $s-$convex in the second sense.
\end{theorem}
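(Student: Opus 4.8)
The plan is to unwind the definition of $s$-convexity in the second sense for the composite $f\circ\varphi$ and, in each case, to reduce the required bound to the defining inequality (\ref{1.7}) specialized to $h(t)=t^{s}$. Recall that a function $g$ is $s$-convex in the second sense on $I$ when $g(tx+(1-t)y)\le t^{s}g(x)+(1-t)^{s}g(y)$ for all $x,y\in I$ and $t\in[0,1]$. Thus, writing $g=f\circ\varphi$, the whole task is to control $f(\varphi(tx+(1-t)y))$ from above by $t^{s}f(\varphi(x))+(1-t)^{s}f(\varphi(y))$, the latter being exactly $t^{s}(f\circ\varphi)(x)+(1-t)^{s}(f\circ\varphi)(y)$.

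For part i), the key observation is that linearity of $\varphi$ furnishes the exact identity $\varphi(tx+(1-t)y)=t\varphi(x)+(1-t)\varphi(y)$. I would substitute this inside $f$, which turns $f(\varphi(tx+(1-t)y))$ into $f(t\varphi(x)+(1-t)\varphi(y))$, and then apply (\ref{1.7}) with $h(t)=t^{s}$ to the points $\varphi(x)$ and $\varphi(y)$ directly. The resulting right-hand side is precisely $t^{s}(f\circ\varphi)(x)+(1-t)^{s}(f\circ\varphi)(y)$, which closes this case immediately.

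For part ii), I would first use convexity of $\varphi$ to obtain $\varphi(tx+(1-t)y)\le t\varphi(x)+(1-t)\varphi(y)$, and then invoke the increasing hypothesis on $f$: applying $f$ to both sides preserves the inequality, yielding $f(\varphi(tx+(1-t)y))\le f(t\varphi(x)+(1-t)\varphi(y))$. From this point the chain continues exactly as in part i), bounding $f(t\varphi(x)+(1-t)\varphi(y))$ via (\ref{1.7}) by $t^{s}f(\varphi(x))+(1-t)^{s}f(\varphi(y))$, and the composition of the two estimates gives the claim.

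The only genuinely delicate point, and hence the step I would monitor most closely, is the direction-of-inequality interplay in part ii): convexity of $\varphi$ alone produces an argument lying \emph{below} the convex combination $t\varphi(x)+(1-t)\varphi(y)$, and it is precisely the monotonicity of $f$ that allows this inequality to survive composition and point in the useful direction. Without the increasing hypothesis the chain breaks, so this is where both assumptions of ii) are truly exploited. I would also briefly check that $\varphi(x),\varphi(y)$ and their convex combinations remain in the set where (\ref{1.7}) is valid, which is guaranteed by $\varphi$ mapping $[a,b]$ into $[a,b]$ together with the convexity of the domain.
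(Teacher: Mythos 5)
Your proof is correct and follows essentially the same route as the paper's: in i) you use linearity to rewrite $\varphi(tx+(1-t)y)$ as $t\varphi(x)+(1-t)\varphi(y)$ and apply (\ref{1.7}) with $h(t)=t^{s}$, and in ii) you combine convexity of $\varphi$ with monotonicity of $f$ before invoking the same inequality. Your closing remarks on why monotonicity is indispensable in ii) and on the domain check are sensible additions, but the argument itself matches the paper's.
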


\begin{proof}
i) From $\varphi _{s}-$convexity of $f$ and linearity of $\varphi ,$ we have%
\begin{eqnarray*}
f\circ \varphi \left[ \lambda x+(1-\lambda )y\right] &=&f\left[ \varphi
\left( \lambda x+(1-\lambda )y\right) \right] \\
&=&f\left[ \lambda \varphi (x)+(1-\lambda )\varphi (y)\right] \\
&\leq &\lambda ^{s}f(\varphi (x))+\left( 1-\lambda \right) ^{s}f(\varphi (y))
\end{eqnarray*}%
which completes the proof for first case.

ii) From convexity of $\varphi $, we have%
\begin{equation*}
\varphi \left[ \lambda x+(1-\lambda )y\right] \leq \lambda \varphi
(x)+(1-\lambda )\varphi (y).
\end{equation*}%
Since $f$ is increasing we can write 
\begin{eqnarray*}
f\circ \varphi \left[ \lambda x+(1-\lambda )y\right] &\leq &f\left[ \lambda
\varphi (x)+(1-\lambda )\varphi (y)\right] \\
&\leq &\lambda ^{s}f(\varphi (x))+\left( 1-\lambda \right) ^{s}f(\varphi
(y)).
\end{eqnarray*}%
This completes the proof for this case.
\end{proof}

\begin{theorem}
\label{teo 2.2} Let $f$ be $\varphi _{s}-$convex and let $%
\sum_{i=1}^{n}t_{i}=T_{n}=1,$ $t_{i}\in (0,1),$ $i=1,2,...,n,$ $s\in (0,1),$
then%
\begin{equation*}
f\left( \sum_{i=1}^{n}t_{i}\varphi (x_{i})\right) \leq
\sum_{i=1}^{n}t_{i}^{s}f(\varphi (x_{i})).
\end{equation*}
\end{theorem}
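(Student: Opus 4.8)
The plan is to prove this by induction on $n$, since the statement is a multi-point (Jensen-type) generalization of the two-point defining inequality for $\varphi_s$-convexity. The base case $n=2$ is exactly the definition specialized to $h(t)=t^s$: with $t_1+t_2=1$ we have $f(t_1\varphi(x_1)+t_2\varphi(x_2))\leq t_1^s f(\varphi(x_1))+t_2^s f(\varphi(x_2))$, which holds directly from inequality~(\ref{1.7}).

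For the inductive step, I would assume the inequality holds for $n-1$ points and prove it for $n$. The standard device is to peel off the last term: write
\begin{equation*}
\sum_{i=1}^{n}t_i\varphi(x_i)=t_n\varphi(x_n)+(1-t_n)\sum_{i=1}^{n-1}\frac{t_i}{1-t_n}\varphi(x_i).
\end{equation*}
Setting $\lambda=t_n$ and treating $y=\sum_{i=1}^{n-1}\frac{t_i}{1-t_n}\varphi(x_i)$ as a single point, one applies the two-point $\varphi_s$-convexity to obtain
\begin{equation*}
f\Bigl(\sum_{i=1}^{n}t_i\varphi(x_i)\Bigr)\leq t_n^s f(\varphi(x_n))+(1-t_n)^s f\Bigl(\sum_{i=1}^{n-1}\frac{t_i}{1-t_n}\varphi(x_i)\Bigr).
\end{equation*}
Since the coefficients $\frac{t_i}{1-t_n}$ sum to $1$, the induction hypothesis applies to the remaining term, yielding $(1-t_n)^s\sum_{i=1}^{n-1}\bigl(\frac{t_i}{1-t_n}\bigr)^s f(\varphi(x_i))=(1-t_n)^{s}\,(1-t_n)^{-s}\sum_{i=1}^{n-1}t_i^s f(\varphi(x_i))=\sum_{i=1}^{n-1}t_i^s f(\varphi(x_i))$, and combining with the first term closes the induction.

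The main obstacle, and the point requiring care, is applying inequality~(\ref{1.7}) to the intermediate point $y=\sum_{i=1}^{n-1}\frac{t_i}{1-t_n}\varphi(x_i)$. The definition~\ref{def 1.2} requires the arguments to be of the form $\varphi(x)$ for some $x\in I$, whereas here the second argument is a convex combination of the values $\varphi(x_i)$ rather than a single $\varphi$-image. Strictly, one needs $f$ to satisfy the $s$-convexity inequality directly on such combinations; this is legitimate because the defining inequality~(\ref{1.7}) is really a statement about the real numbers $\varphi(x)$ and $\varphi(y)$ lying in $[a,b]$, so the convex combination $y$ lies in $[a,b]$ as well and the inequality may be read as an ordinary $s$-convexity estimate for $f$ on $[a,b]$. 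I would make this reading explicit at the outset so that the induction proceeds cleanly; the algebraic simplification $(1-t_n)^{s}(1-t_n)^{-s}=1$ that cancels the prefactor is then routine.
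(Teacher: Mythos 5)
Your proof is correct and is essentially the same argument as the paper's: the paper also peels off $t_{n}\varphi (x_{n})$, writes the remainder as $T_{n-1}\sum_{i=1}^{n-1}\frac{t_{i}}{T_{n-1}}\varphi (x_{i})$ with $T_{n-1}=1-t_{n}$, applies the two-point inequality so that $(T_{n-1})^{s}$ cancels against the normalization, and iterates; you merely package the iteration as a formal induction. Your explicit remark that inequality~(\ref{1.7}) must be read as an ordinary $s$-convexity estimate on points of $[a,b]$ (since the intermediate argument is not literally a $\varphi$-image) addresses a gap the paper itself silently passes over, so it is a welcome addition rather than a deviation.
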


\begin{proof}
From the above assumptions, we can write%
\begin{eqnarray*}
f\left( \sum_{i=1}^{n}t_{i}\varphi (x_{i})\right) &=&f\left(
T_{n-1}\sum_{i=1}^{n-1}\frac{t_{i}}{T_{n-1}}\varphi (x_{i})+t_{n}\varphi
(x_{n})\right) \\
&\leq &\left( T_{n-1}\right) ^{s}f\left( \sum_{i=1}^{n-1}\frac{t_{i}}{T_{n-1}%
}\varphi (x_{i})\right) +t_{n}^{s}f(\varphi (x_{n})) \\
&=&\left( T_{n-1}\right) ^{s}f\left( \frac{T_{n-2}}{T_{n-1}}\sum_{i=1}^{n-2}%
\frac{t_{i}}{T_{n-2}}\varphi (x_{i})+\frac{t_{n-1}}{T_{n-1}}\varphi
(x_{n-1})\right) +t_{n}^{s}f(\varphi (x_{n})) \\
&\leq &\left( T_{n-2}\right) ^{s}f\left( \sum_{i=1}^{n-2}\frac{t_{i}}{T_{n-2}%
}\varphi (x_{i})\right) +t_{n-1}^{s}f(\varphi (x_{n-1}))+t_{n}^{s}f(\varphi
(x_{n})) \\
&&\vdots \\
&\leq &\sum_{i=1}^{n}t_{i}^{s}f(\varphi (x_{i})).
\end{eqnarray*}%
This completes the proof.
\end{proof}

\begin{theorem}
\label{teo 2.4} Let $f$ be $\varphi -$Godunova-Levin function. Then i)\ if $%
\varphi $ is linear, then $f\circ \varphi $ belongs to $Q(I)$ and ii) if $f$
is increasing and $\varphi $ is convex, then $f\circ \varphi $ $\in $\ $Q(I).
$
\end{theorem}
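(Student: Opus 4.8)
The plan is to run the same two-case argument as in Theorem \ref{teo 2.1}, but with the Godunova--Levin weight $\tfrac1t$ in place of $t^{s}$; concretely, I read the $\varphi -$Godunova--Levin hypothesis as the instance $h(t)=\tfrac1t$ of inequality (\ref{1.7}), and I recall that $g\in Q(I)$ means $g(\lambda x+(1-\lambda)y)\le \tfrac{1}{\lambda}g(x)+\tfrac{1}{1-\lambda}g(y)$ for all $x,y\in I$ and $\lambda\in(0,1)$. Thus the conclusion I must reach has exactly the shape that the hypothesis delivers once the composition $f\circ\varphi$ is formed.

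For part i), I would first invoke linearity of $\varphi$ to obtain $(f\circ\varphi)(\lambda x+(1-\lambda)y)=f(\lambda\varphi(x)+(1-\lambda)\varphi(y))$, and then apply the $\varphi -$Godunova--Levin inequality to bound the right-hand side by $\tfrac{1}{\lambda}f(\varphi(x))+\tfrac{1}{1-\lambda}f(\varphi(y))$. Rewriting this as $\tfrac{1}{\lambda}(f\circ\varphi)(x)+\tfrac{1}{1-\lambda}(f\circ\varphi)(y)$ is precisely the defining inequality for $Q(I)$, so $f\circ\varphi\in Q(I)$.

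For part ii), the extra ingredient is monotonicity. From convexity of $\varphi$ I have $\varphi(\lambda x+(1-\lambda)y)\le \lambda\varphi(x)+(1-\lambda)\varphi(y)$; applying the increasing map $f$ preserves this inequality, and the $\varphi -$Godunova--Levin bound then closes the estimate exactly as in part i).

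The computation itself is routine, so the only point I would watch carefully is the monotonicity step in part ii): it is used precisely to transport the convexity estimate on $\varphi$ through $f$ without reversing the direction, and it is here that the hypothesis ``$f$ increasing'' is indispensable. I would also note that since $\lambda\in(0,1)$ the coefficients $\tfrac1\lambda,\tfrac1{1-\lambda}$ are positive and $f\ge 0$, so no sign cancellation can disturb the final combination.
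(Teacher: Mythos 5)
Your argument is correct and coincides with the paper's own proof: linearity (resp.\ convexity plus monotonicity of $f$) reduces the claim to the $\varphi-$Godunova--Levin inequality with $h(t)=\frac{1}{t}$, which is exactly the defining inequality for $Q(I)$ after composing with $\varphi$. Your closing remark about the monotonicity step being where the hypothesis ``$f$ increasing'' enters is the same observation the paper relies on implicitly.
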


\begin{proof}
\ i) Since $f$ is $\varphi -$Godunova-Levin function and from linearity of $%
\varphi ,$ we have\ 
\begin{eqnarray*}
f\circ \varphi \left[ \lambda x+(1-\lambda )y\right]  &=&f\left[ \varphi
\left( \lambda x+(1-\lambda )y\right) \right]  \\
&=&f\left[ \lambda \varphi (x)+(1-\lambda )\varphi (y)\right]  \\
&\leq &\frac{f\circ \varphi (x)}{\lambda }+\frac{f\circ \varphi (y)}{%
1-\lambda }
\end{eqnarray*}%
which completes the proof.

ii) From convexity of $\varphi $, we have 
\begin{equation*}
\varphi \left[ \lambda x+(1-\lambda )y\right] \leq \lambda \varphi
(x)+(1-\lambda )\varphi (y).
\end{equation*}%
Since $f$ is increasing we can write\ \ 
\begin{eqnarray*}
f\circ \varphi \left[ \lambda x+(1-\lambda )y\right]  &\leq &f\left[ \lambda
\varphi (x)+(1-\lambda )\varphi (y)\right]  \\
&\leq &\frac{f\circ \varphi (x)}{\lambda }+\frac{f\circ \varphi (y)}{%
1-\lambda }.
\end{eqnarray*}%
This completes the proof.\ \ \ \ \ \ \ \ \ 
\end{proof}

\begin{theorem}
\label{teo 2.6} Let $f$ be $\varphi -$Godunova-Levin function and let $%
\sum_{i=1}^{n}t_{i}=T_{n}=1,$ $t_{i}\in (0,1),$ $i=1,2,...,n,$\ then%
\begin{equation*}
f\left( \sum_{i=1}^{n}t_{i}\varphi (x_{i})\right) \leq \sum_{i=1}^{n}\frac{%
f(\varphi (x_{i}))}{t_{i}}.
\end{equation*}
\end{theorem}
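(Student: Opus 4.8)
The plan is to run exactly the iterated-splitting argument used in the proof of Theorem~\ref{teo 2.2}, with the single replacement of the weight $t^{s}$ by the Godunova-Levin weight $1/t$. The only two-point tool required is the defining inequality of a $\varphi -$Godunova-Levin function, that is (\ref{1.7}) with $h(t)=1/t$:
\begin{equation*}
f\left( t\varphi (x)+(1-t)\varphi (y)\right) \leq \frac{f(\varphi (x))}{t}+\frac{f(\varphi (y))}{1-t}.
\end{equation*}

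First I would peel off the last term. Since $\sum_{i=1}^{n}t_{i}=1$ we have $1-t_{n}=T_{n-1}$, so
\begin{equation*}
f\left( \sum_{i=1}^{n}t_{i}\varphi (x_{i})\right) =f\left( T_{n-1}\sum_{i=1}^{n-1}\frac{t_{i}}{T_{n-1}}\varphi (x_{i})+t_{n}\varphi (x_{n})\right),
\end{equation*}
and applying the two-point inequality with parameter $t=T_{n-1}$ yields
\begin{equation*}
f\left( \sum_{i=1}^{n}t_{i}\varphi (x_{i})\right) \leq \frac{1}{T_{n-1}}f\left( \sum_{i=1}^{n-1}\frac{t_{i}}{T_{n-1}}\varphi (x_{i})\right) +\frac{f(\varphi (x_{n}))}{t_{n}}.
\end{equation*}
The inner weights $t_{i}/T_{n-1}$ again sum to $1$, so the very same splitting applies to the reduced sum, this time detaching the $(n-1)$-th term with parameter $T_{n-2}/T_{n-1}$.

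The one point to verify --- and what I expect to be the crux --- is that the accumulated reciprocal prefactors telescope correctly. After the second split the coefficient of $f(\varphi (x_{n-1}))$ is $\frac{1}{T_{n-1}}\cdot\frac{T_{n-1}}{t_{n-1}}=\frac{1}{t_{n-1}}$, while the surviving sum carries the coefficient $\frac{1}{T_{n-1}}\cdot\frac{T_{n-1}}{T_{n-2}}=\frac{1}{T_{n-2}}$, which is clean and ready for the next step. Thus each freshly detached term acquires precisely the weight $1/t_{j}$ claimed in the statement, while the residual sum always retains a single reciprocal prefactor $1/T_{j-1}$. Iterating down to the last surviving term, where $T_{1}=t_{1}$, gives
\begin{equation*}
f\left( \sum_{i=1}^{n}t_{i}\varphi (x_{i})\right) \leq \sum_{i=1}^{n}\frac{f(\varphi (x_{i}))}{t_{i}},
\end{equation*}
which is the assertion. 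Equivalently one may phrase this as an induction on $n$: the base case $n=2$ is the definition itself, and the inductive step is exactly the single peeling-off above combined with the telescoping identity $\frac{1}{T_{n-1}}\cdot\frac{T_{n-1}}{t_{i}}=\frac{1}{t_{i}}$.
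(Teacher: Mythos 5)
Your proposal is correct and follows essentially the same argument as the paper: the identical peeling-off of the last term via the two-point inequality with parameter $T_{n-1}$, and the same telescoping of the reciprocal prefactors $\frac{1}{T_{n-1}}\cdot\frac{T_{n-1}}{t_{n-1}}=\frac{1}{t_{n-1}}$ and $\frac{1}{T_{n-1}}\cdot\frac{T_{n-1}}{T_{n-2}}=\frac{1}{T_{n-2}}$ that the paper carries out implicitly in its chain of displayed inequalities. The only caveat, which you share with the paper, is that the two-point inequality is applied to the intermediate point $\sum_{i=1}^{n-1}\frac{t_{i}}{T_{n-1}}\varphi (x_{i})$, which need not itself be of the form $\varphi (z)$ for some $z\in I$.
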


\begin{proof}
From the above assumptions, we can write%
\begin{eqnarray*}
f\left( \sum_{i=1}^{n}t_{i}\varphi (x_{i})\right)  &=&f\left(
T_{n-1}\sum_{i=1}^{n-1}\frac{t_{i}}{T_{n-1}}\varphi (x_{i})+t_{n}\varphi
(x_{n})\right)  \\
&\leq &\frac{f\left( \sum_{i=1}^{n-1}\frac{t_{i}}{T_{n-1}}\varphi
(x_{i})\right) }{T_{n-1}}+\frac{f(\varphi (x_{n}))}{t_{n}} \\
&=&\frac{1}{T_{n-1}}f\left( \frac{T_{n-2}}{T_{n-1}}\sum_{i=1}^{n-2}\frac{%
t_{i}}{T_{n-2}}\varphi (x_{i})+\frac{t_{n-1}}{T_{n-1}}\varphi
(x_{n-1})\right) +\frac{f(\varphi (x_{n}))}{t_{n}} \\
&\leq &\frac{f\left( \sum_{i=1}^{n-2}\frac{t_{i}}{T_{n-2}}\varphi
(x_{i})\right) }{T_{n-2}}+\frac{f(\varphi (x_{n-1}))}{t_{n-1}}+\frac{%
f(\varphi (x_{n}))}{t_{n}} \\
&&\vdots  \\
&\leq &\sum_{i=1}^{n}\frac{f(\varphi (x_{i}))}{t_{i}}.
\end{eqnarray*}%
This completes the proof.\ \ \ \ 
\end{proof}

\begin{theorem}
\label{teo 2.7} Let $f$ be $\varphi -P-$convex function. Then i)\ if $%
\varphi $ is linear, then $f\circ \varphi $ belongs to $P(I)$ and ii) if $f$
is increasing and $\varphi $ is convex, then $f\circ \varphi $ $\in $\ $P(I)$%
.
\end{theorem}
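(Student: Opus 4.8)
The plan is to follow verbatim the template established in Theorems \ref{teo 2.1} and \ref{teo 2.4}, since the structure of the argument is identical: the only change is that the weight function $h$ is now the constant $h(t)=1$. With this choice the defining inequality for a $\varphi-P-$function reads
\begin{equation*}
f\left( t\varphi (x)+(1-t)\varphi (y)\right) \leq f(\varphi (x))+f(\varphi (y)),
\end{equation*}
while the target class $P(I)$ consists of the functions $g$ satisfying $g(\lambda x+(1-\lambda )y)\leq g(x)+g(y)$ for all admissible $x,y,\lambda$.

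For part i) I would first invoke the linearity of $\varphi $ to rewrite $\varphi (\lambda x+(1-\lambda )y)=\lambda \varphi (x)+(1-\lambda )\varphi (y)$, and then apply the $\varphi-P-$property of $f$ directly. The resulting right-hand side $f(\varphi (x))+f(\varphi (y))$ is precisely $(f\circ \varphi )(x)+(f\circ \varphi )(y)$, which is exactly the defining bound for membership in $P(I)$. For part ii) I would instead begin from convexity of $\varphi $, which yields $\varphi (\lambda x+(1-\lambda )y)\leq \lambda \varphi (x)+(1-\lambda )\varphi (y)$; applying the monotonicity (increasingness) of $f$ passes this estimate through $f$, and the $\varphi-P-$property then bounds the outcome by $f(\varphi (x))+f(\varphi (y))$.

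I do not anticipate any genuine obstacle here, as the computation is a direct transcription of the two preceding proofs specialized to the constant weight. The single point that warrants a moment's care is part ii), where the chain strings two inequalities together—first the monotonicity of $f$ applied to the inner convexity estimate, then the $P$-type bound—so I would check that the direction of monotonicity (namely, that $f$ is \emph{increasing}) is precisely what makes the first of these two steps valid.
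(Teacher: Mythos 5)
Your proposal is correct and coincides with the paper's own proof: part i) rewrites $\varphi(\lambda x+(1-\lambda)y)$ by linearity and applies the $\varphi-P-$inequality, and part ii) chains the convexity estimate for $\varphi$ through the monotonicity of $f$ before invoking the same inequality. The point you flag about needing $f$ increasing in part ii) is exactly the role it plays in the paper's argument.
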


\begin{proof}
\ i) From $\varphi -P-$convexity of $f$ and linearity of $\varphi ,$ we
have\ 
\begin{eqnarray*}
f\circ \varphi \left[ \lambda x+(1-\lambda )y\right]  &=&f\left[ \varphi
\left( \lambda x+(1-\lambda )y\right) \right]  \\
&=&f\left[ \lambda \varphi (x)+(1-\lambda )\varphi (y)\right]  \\
&\leq &f(\varphi (x))+f(\varphi (y)),
\end{eqnarray*}%
which completes the proof.

ii) From convexity of $\varphi $, we have 
\begin{equation*}
\varphi \left[ \lambda x+(1-\lambda )y\right] \leq \lambda \varphi
(x)+(1-\lambda )\varphi (y).
\end{equation*}%
Since $f$ is increasing we can write\ \ 
\begin{eqnarray*}
f\circ \varphi \left[ \lambda x+(1-\lambda )y\right]  &\leq &f\left[ \lambda
\varphi (x)+(1-\lambda )\varphi (y)\right]  \\
&\leq &f(\varphi (x))+f(\varphi (y)).
\end{eqnarray*}%
This completes the proof.
\end{proof}

\begin{theorem}
\label{teo 2.9} Let $f$ be $\varphi -P-$convex and let $%
\sum_{i=1}^{n}t_{i}=T_{n}=1,$ $t_{i}\in (0,1),$ $i=1,2,...,n,$\ then%
\begin{equation*}
f\left( \sum_{i=1}^{n}t_{i}\varphi (x_{i})\right) \leq
\sum_{i=1}^{n}f(\varphi (x_{i})).
\end{equation*}
\end{theorem}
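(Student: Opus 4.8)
The plan is to mirror the inductive telescoping arguments already used for Theorems~\ref{teo 2.2} and~\ref{teo 2.6}, now invoking the defining inequality of a $\varphi -P-$convex function, namely
\[
f\left( t\varphi (x)+(1-t)\varphi (y)\right) \leq f(\varphi (x))+f(\varphi (y)),
\]
which is the $h(t)\equiv 1$ case of Definition~\ref{def 1.2}. The only structural difference from those two results is that here the peeling step introduces no weight factor at all (neither a power $\left( T_{n-1}\right) ^{s}$ nor a reciprocal $1/T_{n-1}$), so each application of the definition simply contributes one clean summand $f(\varphi (x_{i}))$.

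First I would set $T_{k}=\sum_{i=1}^{k}t_{i}$ and split off the last term,
\[
\sum_{i=1}^{n}t_{i}\varphi (x_{i})=T_{n-1}\sum_{i=1}^{n-1}\frac{t_{i}}{T_{n-1}}\varphi (x_{i})+t_{n}\varphi (x_{n}),
\]
observing that $T_{n-1}+t_{n}=T_{n}=1$, so the coefficients $T_{n-1}$ and $t_{n}$ form an admissible pair for the $\varphi -P$ inequality. Applying the definition, with the interior convex combination $\sum_{i=1}^{n-1}\frac{t_{i}}{T_{n-1}}\varphi (x_{i})$ playing the role of the first argument exactly as in the proof of Theorem~\ref{teo 2.6}, gives
\[
f\left( \sum_{i=1}^{n}t_{i}\varphi (x_{i})\right) \leq f\left( \sum_{i=1}^{n-1}\frac{t_{i}}{T_{n-1}}\varphi (x_{i})\right) +f(\varphi (x_{n})).
\]

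Next I would iterate. Rewriting $\sum_{i=1}^{n-1}\frac{t_{i}}{T_{n-1}}\varphi (x_{i})=\frac{T_{n-2}}{T_{n-1}}\sum_{i=1}^{n-2}\frac{t_{i}}{T_{n-2}}\varphi (x_{i})+\frac{t_{n-1}}{T_{n-1}}\varphi (x_{n-1})$ and noting that $\frac{T_{n-2}}{T_{n-1}}+\frac{t_{n-1}}{T_{n-1}}=1$, a second application of the $\varphi -P$ inequality peels off $f(\varphi (x_{n-1}))$, and so on down the chain. After $n-1$ such steps every term $f(\varphi (x_{i}))$ has been extracted with coefficient one, which yields the claimed bound $\sum_{i=1}^{n}f(\varphi (x_{i}))$.

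I do not anticipate a genuine obstacle, since the argument is a formal induction identical in shape to the two preceding theorems. The only point deserving care is the same mild convention used throughout this section, that the defining two-point inequality is applied with an interior convex combination occupying the first slot, together with the routine bookkeeping that each coefficient pair $\left( T_{k-1}/T_{k},\,t_{k}/T_{k}\right) $ sums to one, which is immediate from $T_{k-1}+t_{k}=T_{k}$.
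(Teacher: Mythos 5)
Your proposal is correct and follows essentially the same route as the paper's own proof: split off the last term via $T_{n-1}\sum_{i=1}^{n-1}\frac{t_{i}}{T_{n-1}}\varphi (x_{i})+t_{n}\varphi (x_{n})$, apply the $\varphi -P$ inequality to peel off $f(\varphi (x_{n}))$ with unit coefficient, and iterate down to the full sum. You even flag explicitly the one tacit convention the paper also relies on, namely that the two-point inequality is applied with an interior convex combination in the first slot as if it were a point of the form $\varphi(z)$.
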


\begin{proof}
From the above assumptions, we can write%
\begin{eqnarray*}
f\left( \sum_{i=1}^{n}t_{i}\varphi (x_{i})\right)  &=&f\left(
T_{n-1}\sum_{i=1}^{n-1}\frac{t_{i}}{T_{n-1}}\varphi (x_{i})+t_{n}\varphi
(x_{n})\right)  \\
&\leq &f\left( \sum_{i=1}^{n-1}\frac{t_{i}}{T_{n-1}}\varphi (x_{i})\right)
+f(\varphi (x_{n})) \\
&=&f\left( \frac{T_{n-2}}{T_{n-1}}\sum_{i=1}^{n-2}\frac{t_{i}}{T_{n-2}}%
\varphi (x_{i})+\frac{t_{n-1}}{T_{n-1}}\varphi (x_{n-1})\right) +f(\varphi
(x_{n})) \\
&\leq &f\left( \sum_{i=1}^{n-2}\frac{t_{i}}{T_{n-2}}\varphi (x_{i})\right)
+f\circ \varphi (x_{n-1})+f(\varphi (x_{n})) \\
&&\vdots  \\
&\leq &\sum_{i=1}^{n}f(\varphi (x_{i})).
\end{eqnarray*}%
This completes the proof.
\end{proof}

\begin{theorem}
\label{teo 2.12} Let $f$ be $\log -\varphi -$convex function. Then i) if $%
\varphi $ is linear, then $f\circ \varphi $ is $\log -$convex and ii) if $f$
is increasing and $\varphi $ is convex, then $f\circ \varphi $ is $\log -$%
convex function.
\end{theorem}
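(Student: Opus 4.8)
The plan is to follow the exact two-case template already used in Theorems~\ref{teo 2.1}, \ref{teo 2.4} and~\ref{teo 2.7}, replacing the additive right-hand sides by the multiplicative one that defines $\log$-convexity. In both parts the objective is to establish, for all admissible $x,y$ and $\lambda\in[0,1]$, the inequality
$$(f\circ\varphi)\left(\lambda x+(1-\lambda)y\right)\leq\left[(f\circ\varphi)(x)\right]^{\lambda}\left[(f\circ\varphi)(y)\right]^{1-\lambda}.$$

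For part (i), I would begin with $(f\circ\varphi)(\lambda x+(1-\lambda)y)=f\left(\varphi(\lambda x+(1-\lambda)y)\right)$, then use linearity of $\varphi$ to rewrite the argument as $\lambda\varphi(x)+(1-\lambda)\varphi(y)$, and finally apply the defining inequality of $\log-\varphi$-convexity from Definition~\ref{def 1.3} to bound the expression by $\left[f(\varphi(x))\right]^{\lambda}\left[f(\varphi(y))\right]^{1-\lambda}$, which is precisely $\left[(f\circ\varphi)(x)\right]^{\lambda}\left[(f\circ\varphi)(y)\right]^{1-\lambda}$. This closes the first case.

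For part (ii), the only extra ingredient is the ordering of the two geometric steps. I would first invoke convexity of $\varphi$ to obtain $\varphi(\lambda x+(1-\lambda)y)\leq\lambda\varphi(x)+(1-\lambda)\varphi(y)$, then use monotonicity of $f$ to preserve this inequality after applying $f$, and finally close with the same $\log-\varphi$-convexity estimate as in part (i). The step that deserves attention is the use of monotonicity: unlike part (i), where linearity gives an exact equality inside $f$, here one only has an inequality inside $f$, so the increasing hypothesis is genuinely essential to carry it through. Beyond this I do not expect any real obstacle; the argument is entirely parallel to the earlier theorems, the sole change being that the additive combination is everywhere replaced by the product $\left[f(\varphi(x))\right]^{\lambda}\left[f(\varphi(y))\right]^{1-\lambda}$.
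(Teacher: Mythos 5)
Your proposal is correct and follows essentially the same two-step argument as the paper: linearity (resp.\ convexity of $\varphi$ plus monotonicity of $f$) to reduce to the defining inequality of $\log-\varphi-$convexity, exactly as in the earlier theorems. Nothing to add.
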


\begin{proof}
i) From $\log -\varphi -$convexity of $f$ and linearity of $\varphi ,$ we
have%
\begin{eqnarray*}
f\circ \varphi \left[ \lambda x+(1-\lambda )y\right]  &=&f\left[ \varphi
\left( \lambda x+(1-\lambda )y\right) \right]  \\
&=&f\left[ \lambda \varphi (x)+(1-\lambda )\varphi (y)\right]  \\
&\leq &\left[ f(\varphi \left( x\right) )\right] ^{\lambda }\left[ f(\varphi
\left( y\right) )\right] ^{1-\lambda }
\end{eqnarray*}%
which completes the proof for first case.

ii) From convexity of $\varphi $, we have%
\begin{equation*}
\varphi \left[ \lambda x+(1-\lambda )y\right] \leq \lambda \varphi
(x)+(1-\lambda )\varphi (y).
\end{equation*}%
Since $f$ is increasing we can write 
\begin{eqnarray*}
f\circ \varphi \left[ \lambda x+(1-\lambda )y\right]  &\leq &f\left[ \lambda
\varphi (x)+(1-\lambda )\varphi (y)\right]  \\
&\leq &\left[ f(\varphi \left( x\right) )\right] ^{\lambda }\left[ f(\varphi
\left( y\right) )\right] ^{1-\lambda }.
\end{eqnarray*}%
This completes the proof for this case.
\end{proof}

\begin{theorem}
\label{teo 2.13} Let $f$ be $\log -\varphi -$convex function. For $a,b\in I$
with $a<b$ and $\lambda \in \lbrack 0,1]$, one has the inequality%
\begin{equation*}
\frac{1}{\varphi (b)-\varphi (a)}\int_{\varphi (a)}^{\varphi
(b)}G(f(x),f(\varphi (a)+\varphi (b)-x))dx\leq G(f\left( \varphi (a)\right)
,f\left( \varphi (b)\right) ).
\end{equation*}%
where $G(,)$ is the geometric mean.
\end{theorem}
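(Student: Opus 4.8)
The plan is to reduce the whole statement to a pointwise estimate on the integrand, after which the conclusion follows simply by integrating a constant bound. Since $f$ maps into $\mathbb{R}^{+}$, the geometric mean $G(u,v)=\sqrt{uv}$ is well defined on all the relevant values, and I would assume $\varphi (a)<\varphi (b)$ so that the normalizing factor $\varphi (b)-\varphi (a)$ is positive.

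First I would introduce the parametrization $x=t\varphi (a)+(1-t)\varphi (b)$ for $t\in [0,1]$, which sweeps out the interval $[\varphi (a),\varphi (b)]$ as $x$ ranges over it. The key algebraic observation is that the reflected point then becomes
\[
\varphi (a)+\varphi (b)-x=(1-t)\varphi (a)+t\varphi (b),
\]
i.e. the same convex combination of $\varphi (a)$ and $\varphi (b)$, but with $t$ and $1-t$ interchanged. Next I would apply the defining inequality of $\log -\varphi -$convexity (Definition \ref{def 1.3}) twice, using the endpoints $a$ and $b$ in the roles of the variables. For the point $x$ this gives $f(x)\leq [f(\varphi (a))]^{t}[f(\varphi (b))]^{1-t}$, while for the reflected point it gives $f(\varphi (a)+\varphi (b)-x)\leq [f(\varphi (a))]^{1-t}[f(\varphi (b))]^{t}$.

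Multiplying these two estimates, the exponents of $f(\varphi (a))$ add to $1$ and likewise for $f(\varphi (b))$, so the product collapses to
\[
f(x)\cdot f(\varphi (a)+\varphi (b)-x)\leq f(\varphi (a))\,f(\varphi (b)),
\]
a bound independent of $t$. Taking square roots yields the pointwise estimate $G(f(x),f(\varphi (a)+\varphi (b)-x))\leq G(f(\varphi (a)),f(\varphi (b)))$, valid for every $x$ in the interval. Finally I would integrate this constant bound over $[\varphi (a),\varphi (b)]$ and divide by $\varphi (b)-\varphi (a)$, which immediately produces the claimed inequality. There is no genuine obstacle here; the only step deserving attention is the cancellation of the exponents in the product, which is precisely where the symmetry of the reflection $x\mapsto \varphi (a)+\varphi (b)-x$ is exploited.
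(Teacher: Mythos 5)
Your proposal is correct and follows essentially the same route as the paper: apply the $\log-\varphi$-convexity inequality at $\lambda$ and at $1-\lambda$, multiply so the exponents cancel, take square roots to get a constant bound on the geometric mean, and integrate (the paper integrates in $\lambda$ and then substitutes $x=\lambda\varphi(a)+(1-\lambda)\varphi(b)$, which is the same computation in the other order). Your explicit remark that one needs $\varphi(a)<\varphi(b)$ for the normalization is a small point the paper leaves implicit.
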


\begin{proof}
Since $f$ is $\log -\varphi -$convex function, we have that%
\begin{equation*}
f\left( \lambda \varphi (a)+(1-\lambda )\varphi (b)\right) \leq \left[
f\left( \varphi (a)\right) \right] ^{\lambda }\left[ f\left( \varphi
(b)\right) \right] ^{1-\lambda }
\end{equation*}%
\begin{equation*}
f\left( (1-\lambda )\varphi (a)+\lambda \varphi (b)\right) \leq \left[
f\left( \varphi (a)\right) \right] ^{1-\lambda }\left[ f\left( \varphi
(b)\right) \right] ^{\lambda }
\end{equation*}%
for all $\lambda \in \lbrack 0,1].$

If we multiply the above inequalities and take square roots, we obtain 
\begin{equation*}
G\left( f\left( \lambda \varphi (a)+(1-\lambda )\varphi (b)\right) ,f\left(
(1-\lambda )\varphi (a)+\lambda \varphi (b)\right) \right) \leq G(f\left(
\varphi (a)\right) ,f\left( \varphi (b)\right) ).
\end{equation*}%
Integrating this inequality over $\lambda $ on $[0,1],$ and changing the
variable $x=\lambda \varphi (a)+(1-\lambda )\varphi (b),$ we have 
\begin{eqnarray*}
&&\int_{0}^{1}G\left( f\left( \lambda \varphi (a)+(1-\lambda )\varphi
(b)\right) ,f\left( (1-\lambda )\varphi (a)+\lambda \varphi (b)\right)
\right) d\lambda  \\
&\leq &G(f\left( \varphi (a)\right) ,f\left( \varphi (b)\right) ),
\end{eqnarray*}%
\begin{equation*}
\frac{1}{\varphi (b)-\varphi (a)}\int_{\varphi (a)}^{\varphi
(b)}G(f(x),f(\varphi (a)+\varphi (b)-x))dx\leq G(f\left( \varphi (a)\right)
,f\left( \varphi (b)\right) )
\end{equation*}%
which completes the proof.
\end{proof}

\begin{definition}
\label{def 2.6} Let us consider a $\varphi :[a,b]\rightarrow \lbrack a,b]$
where $[a,b]\subset 
\mathbb{R}
$ and $I$ stands for a convex subset of $%
\mathbb{R}
.$ We say that a function $f:I\rightarrow 
\mathbb{R}
^{+}$ is a $\varphi -quasi-$convex if 
\begin{equation*}
f\left( t\varphi (x)+(1-t)\varphi (y)\right) \leq \max \left\{ \left[
f(\varphi (x))\right] ,\left[ f\left( \varphi (y)\right) \right] \right\} 
\end{equation*}%
for all $x,y\in I$ and $t\in \lbrack 0,1].$

\begin{theorem}
\label{teo 2.15} Let $f$ be $\varphi -quasi-$convex function. Then i) if $%
\varphi $ is linear, then $f\circ \varphi $ is $quasi-$convex and ii) if $f$
is increasing and $\varphi $ is convex, then $f\circ \varphi $ is $quasi-$%
convex function.
\end{theorem}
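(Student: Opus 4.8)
The plan is to follow exactly the two-case template that governs the preceding composition theorems (Theorems \ref{teo 2.1}, \ref{teo 2.4}, \ref{teo 2.7}, and \ref{teo 2.12}), since $\varphi-quasi-$convexity has precisely the same structural form as those convexity notions, differing only in that the right-hand bound is $\max\{f(\varphi(x)),f(\varphi(y))\}$ rather than a sum or product. The goal in each case is to produce the standard quasi-convexity inequality $(f\circ\varphi)(\lambda x+(1-\lambda)y)\leq \max\{(f\circ\varphi)(x),(f\circ\varphi)(y)\}$.

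For part (i), I would start from $f\circ\varphi[\lambda x+(1-\lambda)y]=f[\varphi(\lambda x+(1-\lambda)y)]$, then invoke linearity of $\varphi$ to rewrite the argument as $\lambda\varphi(x)+(1-\lambda)\varphi(y)$, and finally apply the defining inequality of $\varphi-quasi-$convexity (Definition \ref{def 2.6}) to bound this by $\max\{f(\varphi(x)),f(\varphi(y))\}$. Recognizing that $f(\varphi(x))=(f\circ\varphi)(x)$ closes the argument. For part (ii), I would instead use convexity of $\varphi$ to get $\varphi[\lambda x+(1-\lambda)y]\leq\lambda\varphi(x)+(1-\lambda)\varphi(y)$, then apply monotonicity of $f$ to pass the bound through $f$, yielding $f\circ\varphi[\lambda x+(1-\lambda)y]\leq f[\lambda\varphi(x)+(1-\lambda)\varphi(y)]$, and conclude once more with the $\varphi-quasi-$convexity inequality.

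I do not expect any genuine obstacle here; the only point requiring a moment's care is the same one implicit in case (ii) of every earlier theorem, namely that the inequality $\varphi[\lambda x+(1-\lambda)y]\leq\lambda\varphi(x)+(1-\lambda)\varphi(y)$ can only be pushed forward through $f$ because $f$ is assumed increasing. Without monotonicity the direction of the composed inequality could fail, so that hypothesis is essential and is exactly where the two cases diverge. Otherwise the proof is a direct, mechanical instantiation of the established pattern, and I would present both cases with the same displayed chain of (in)equalities used throughout Section 2.
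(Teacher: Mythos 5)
Your proposal matches the paper's own proof step for step: part (i) uses linearity of $\varphi$ to rewrite the argument and then applies the defining inequality of $\varphi-quasi-$convexity, and part (ii) chains convexity of $\varphi$ with monotonicity of $f$ before invoking the same inequality. The argument is correct and is essentially identical to the one given in the paper.
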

\end{definition}

\begin{proof}
i) From $\varphi -quasi-$convexity of $f$ and linearity of $\varphi ,$ we
have%
\begin{eqnarray*}
f\circ \varphi \left[ \lambda x+(1-\lambda )y\right]  &=&f\left[ \varphi
\left( \lambda x+(1-\lambda )y\right) \right]  \\
&=&f\left[ \lambda \varphi (x)+(1-\lambda )\varphi (y)\right]  \\
&\leq &\max \left\{ f(\varphi (x)),f(\varphi (y))\right\} ,
\end{eqnarray*}%
which completes the proof for first case.

ii) From convexity of $\varphi $, we have%
\begin{equation*}
\varphi \left[ \lambda x+(1-\lambda )y\right] \leq \lambda \varphi
(x)+(1-\lambda )\varphi (y).
\end{equation*}%
Since $f$ is increasing we can write 
\begin{eqnarray*}
f\circ \varphi \left[ \lambda x+(1-\lambda )y\right]  &\leq &f\left[ \lambda
\varphi (x)+(1-\lambda )\varphi (y)\right]  \\
&\leq &\max \left\{ f(\varphi (x)),f(\varphi (y))\right\} .
\end{eqnarray*}%
This completes the proof for this case.
\end{proof}

\begin{theorem}
\label{teo 2.16} Let $f$ be $\varphi -quasi-$convex function. For $x,y\in
\lbrack a,b],$ $x<y$ and $\lambda \in \lbrack 0,1]$, one has the inequality%
\begin{equation}
\frac{1}{\varphi (y)-\varphi (x)}\int_{\varphi (x)}^{\varphi (y)}f(u)du\leq
\max \left\{ f\left( \varphi (x)\right) ,f\left( \varphi (y)\right) \right\}
.  \label{2.1}
\end{equation}
\end{theorem}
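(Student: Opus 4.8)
The plan is to follow the template of Theorem \ref{teo 2.13}, but the argument here is simpler because the right-hand side of the defining inequality for a $\varphi -quasi-$convex function does not depend on the parameter $\lambda$. Consequently there is no need to symmetrize by multiplying two instances of the inequality (as was done with the geometric mean in Theorem \ref{teo 2.13}); a single integration will suffice.

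First I would invoke the $\varphi -quasi-$convexity of $f$ at the points $x$ and $y$: for every $\lambda \in \lbrack 0,1]$,
\begin{equation*}
f\left( \lambda \varphi (x)+(1-\lambda )\varphi (y)\right) \leq \max \left\{ f\left( \varphi (x)\right) ,f\left( \varphi (y)\right) \right\} .
\end{equation*}
Since the right-hand side is constant in $\lambda $, integrating both sides over $\lambda \in \lbrack 0,1]$ preserves the inequality and yields
\begin{equation*}
\int_{0}^{1}f\left( \lambda \varphi (x)+(1-\lambda )\varphi (y)\right) d\lambda \leq \max \left\{ f\left( \varphi (x)\right) ,f\left( \varphi (y)\right) \right\} .
\end{equation*}

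Next I would perform the change of variable $u=\lambda \varphi (x)+(1-\lambda )\varphi (y)$, so that $du=\left( \varphi (x)-\varphi (y)\right) d\lambda $, with $u=\varphi (y)$ when $\lambda =0$ and $u=\varphi (x)$ when $\lambda =1$. Substituting transforms the left-hand integral into $\frac{1}{\varphi (x)-\varphi (y)}\int_{\varphi (y)}^{\varphi (x)}f(u)\,du$, which, after reversing the limits of integration to absorb the sign, equals $\frac{1}{\varphi (y)-\varphi (x)}\int_{\varphi (x)}^{\varphi (y)}f(u)\,du$. This is exactly the left-hand side of (\ref{2.1}), completing the proof.

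The only point requiring care is the bookkeeping in the substitution: one must track that $d\lambda $ carries a factor $1/(\varphi (x)-\varphi (y))$ while the limits run from $\varphi (y)$ to $\varphi (x)$, so that the two sign reversals cancel and produce the correctly oriented integral $\frac{1}{\varphi (y)-\varphi (x)}\int_{\varphi (x)}^{\varphi (y)}$. Beyond this the argument is entirely routine, and it applies regardless of whether $\varphi (x)<\varphi (y)$ or $\varphi (x)>\varphi (y)$, since the signed integral adjusts automatically.
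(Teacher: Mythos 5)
Your proof is correct and follows essentially the same route as the paper: integrate the defining inequality over $\lambda \in [0,1]$ and change variables $u=\lambda \varphi (x)+(1-\lambda )\varphi (y)$ to produce the mean-value integral in (\ref{2.1}). The only difference is that the paper also writes the inequality at $(1-\lambda )\varphi (x)+\lambda \varphi (y)$, adds, and averages before integrating; this symmetrization is redundant (the second integral equals the first under $\lambda \mapsto 1-\lambda$), so your single-integration version is a minor streamlining of the same argument.
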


\begin{proof}
Since $f$ is $\varphi -quasi-$convex function, we can write%
\begin{equation*}
f\left[ \lambda \varphi (x)+(1-\lambda )\varphi (y)\right] \leq \max \left\{
f\left( \varphi (x)\right) ,f\left( \varphi (y)\right) \right\} 
\end{equation*}%
and 
\begin{equation*}
f\left[ (1-\lambda )\varphi (x)+\lambda \varphi (y)\right] \leq \max \left\{
f\left( \varphi (x)\right) ,f\left( \varphi (y)\right) \right\} .
\end{equation*}%
If we add the above inequalities and integrate on $[0,1],$ we have%
\begin{eqnarray*}
&&\frac{1}{2}\int_{0}^{1}\left[ f\left[ \lambda \varphi (x)+(1-\lambda
)\varphi (y)\right] +f\left[ (1-\lambda )\varphi (x)+\lambda \varphi (y)%
\right] \right] d\lambda  \\
&\leq &\max \left\{ f\left( \varphi (x)\right) ,f\left( \varphi (y)\right)
\right\} 
\end{eqnarray*}%
which is equal to inequality in (\ref{2.1}).
\end{proof}

\begin{theorem}
\label{teo 2.17} Let $f$ be $\varphi -quasi-$convex and let $%
\sum_{i=1}^{n}t_{i}=T_{n}=1,$ $t_{i}\in (0,1),$ $i=1,2,...,n,$\ then%
\begin{equation*}
f\left( \sum_{i=1}^{n}t_{i}\varphi (x_{i})\right) \leq \max_{1\leq i\leq
n}f\left( \varphi (x_{i})\right) .
\end{equation*}
\end{theorem}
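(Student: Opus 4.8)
The plan is to follow exactly the telescoping scheme used in the proofs of Theorems \ref{teo 2.2}, \ref{teo 2.6}, and \ref{teo 2.9}, but with the quasi-convexity inequality in place of the $\varphi_s$-, Godunova--Levin-, or $P$-estimate. I would phrase it as an induction on $n$ (equivalently, peel off the last weight one term at a time). The base case $n=2$ is immediate: since $t_1+t_2=1$, the definition of $\varphi$-quasi-convexity gives
\[
f\left(t_1\varphi(x_1)+t_2\varphi(x_2)\right)\leq\max\{f(\varphi(x_1)),f(\varphi(x_2))\}=\max_{1\leq i\leq 2}f(\varphi(x_i)).
\]

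For the inductive step I would write, as in the earlier proofs,
\[
\sum_{i=1}^{n}t_i\varphi(x_i)=T_{n-1}\sum_{i=1}^{n-1}\frac{t_i}{T_{n-1}}\varphi(x_i)+t_n\varphi(x_n),
\]
where $T_{n-1}=\sum_{i=1}^{n-1}t_i=1-t_n$, so that $T_{n-1}+t_n=1$ and the right-hand side is a genuine convex combination with parameter $\lambda=T_{n-1}$. Applying $\varphi$-quasi-convexity to this pair of points yields
\[
f\left(\sum_{i=1}^{n}t_i\varphi(x_i)\right)\leq\max\left\{f\left(\sum_{i=1}^{n-1}\frac{t_i}{T_{n-1}}\varphi(x_i)\right),f(\varphi(x_n))\right\}.
\]
The weights $t_i/T_{n-1}$ for $i=1,\dots,n-1$ are positive and sum to $1$, so the induction hypothesis bounds $f\big(\sum_{i=1}^{n-1}\frac{t_i}{T_{n-1}}\varphi(x_i)\big)$ by $\max_{1\leq i\leq n-1}f(\varphi(x_i))$. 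Using the nesting identity $\max\{\max\{A_1,\dots,A_{n-1}\},A_n\}=\max\{A_1,\dots,A_n\}$ then collapses the bound to $\max_{1\leq i\leq n}f(\varphi(x_i))$, completing the step.

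Two remarks about where care is needed. First, unlike the bounds in Theorems \ref{teo 2.2} and \ref{teo 2.6}, no weight factor such as $(T_{n-1})^s$ or $1/T_{n-1}$ accumulates here, so the estimate does not deteriorate across iterations; the maximum on the right is exactly $\max_i f(\varphi(x_i))$ at every stage, which is precisely what makes the telescoping collapse cleanly. Second, the one genuinely delicate point — the same one already implicit in the previous theorems — is that at each step the quasi-convexity inequality is applied with first argument $\sum_{i=1}^{k-1}\frac{t_i}{T_{k-1}}\varphi(x_i)$, which is a convex combination of image values rather than a single value $\varphi(x)$. This is legitimate because such combinations lie in the convex set $[a,b]$ on which $f$ and $\varphi$ act, so the defining inequality is read as applying to all such points. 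I regard confirming that this is the intended reading as the main (and essentially only) obstacle, the remainder being the routine max-telescoping rehearsed above.
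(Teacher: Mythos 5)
Your proposal is correct and follows essentially the same argument as the paper: the same peeling decomposition $\sum_{i=1}^{n}t_{i}\varphi (x_{i})=T_{n-1}\sum_{i=1}^{n-1}\frac{t_{i}}{T_{n-1}}\varphi (x_{i})+t_{n}\varphi (x_{n})$, the same application of $\varphi-quasi-$convexity at each stage, and the same collapsing of nested maxima; the paper merely writes the recursion out with ellipses rather than as a formal induction. The delicate point you flag (applying the defining inequality when the first argument is a convex combination of image values rather than a single $\varphi(x)$) is present in the paper's proof as well and is tacitly assumed there, so it is not a discrepancy between your argument and theirs.
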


\begin{proof}
From the above assumptions, we can write%
\begin{eqnarray*}
f\left( \sum_{i=1}^{n}t_{i}\varphi (x_{i})\right)  &=&f\left(
T_{n-1}\sum_{i=1}^{n-1}\frac{t_{i}}{T_{n-1}}\varphi (x_{i})+t_{n}\varphi
(x_{n})\right)  \\
&\leq &\max \left\{ f\left( \sum_{i=1}^{n-1}\frac{t_{i}}{T_{n-1}}\varphi
(x_{i})\right) ,f\left( \varphi (x_{n})\right) \right\}  \\
&=&\max \left\{ f\left( \frac{T_{n-2}}{T_{n-1}}\sum_{i=1}^{n-2}\frac{t_{i}}{%
T_{n-2}}\varphi (x_{i})+\frac{t_{n-1}}{T_{n-1}}\varphi (x_{n-1})\right)
,f\left( \varphi (x_{n})\right) \right\}  \\
&\leq &\max \left\{ f\left( \sum_{i=1}^{n-2}\frac{t_{i}}{T_{n-2}}\varphi
(x_{i})\right) ,f\left( \varphi (x_{n-1})\right) ,f\left( \varphi
(x_{n})\right) \right\}  \\
&&\vdots  \\
&\leq &\max \left\{ f\left( \varphi (x_{1})\right) ,...,f\left( \varphi
(x_{n-1})\right) ,f\left( \varphi (x_{n})\right) \right\}  \\
&=&\max_{1\leq i\leq n}f\left( \varphi (x_{i})\right) .
\end{eqnarray*}%
This completes the proof.
\end{proof}

\end{document}